\documentclass[12pt,fleqn]{amsart}

\usepackage[all]{xy}
\usepackage{tikz-cd}
\usepackage{amscd}
\usepackage{setspace}
\usepackage{korpi-lib}
\usepackage{color}
\usepackage{enumerate}
\usepackage{amssymb}
\usepackage{hyperref}

\usepackage{comment}

\newcommand{\bmd}{d_{\rm BM}}
\newcommand{\bmdp}{d_{\rm BM}^+}

\title{On positive  Banach--Mazur distance}
\author[M. Korpalski]{Maciej Korpalski}
\author[G.\ Plebanek]{Grzegorz Plebanek}
\address{Instytut Matematyczny, Uniwersytet Wroc\l awski, pl.\ Grunwaldzki 2, 50-384 Wroc\-\l aw\\ Poland}
\email{Maciej.Korpalski@math.uni.wroc.pl \\ Grzegorz.Plebanek@math.uni.wroc.pl}
\date{}

\begin{document}

\begin{abstract}
In this working note we study the one-sided positive Banach--Mazur distance in order to address questions posed in  
\cite{CHRS26}.
Building on  methods developed in \cite{KP25}, we prove that
\[d_{\rm BM}^+\big( C([0,\omega] \times 2), C[0,\omega]\big)\ge \frac{3+\sqrt{17}}{2}>3.56\]
and
\[d_{\rm BM}^+\big(  C[0,\omega], C([0,\omega] \times 2)\big) >3 .\]
In particular, both quantities exceed $3 = d_{\rm BM}\big( C([0,\omega] \times 2), C[0,\omega]\big)$, which solves one of the problems.
\end{abstract}

\maketitle

\section{Introduction}
Given two isomorphic Banach spaces $X$ and $Y$, their Banach--Mazur distance $\bmd(X,Y)$ is defined as the infimum of the distortions $\|T\|\cdot\|T^{-1}\|$ taken over all isomorphisms $T:X\to Y$.
If $K$ and $L$ are compact Hausdorff spaces and $C(K),C(L)$ denote the Banach spaces of real-valued continuous functions with the usual supremum norm, then we can consider, as in \cite{CHRS26}, the positive Banach--Mazur distance $\bmdp\big( C(K), C(L)\big)$ which is defined as the infimum of $\|T\|\cdot\|T^{-1}\|$ taken over all {\bf positive} isomorphisms $T: C(K) \to C(L)$, so isomorphisms satisfying $Tf \ge 0$ whenever $f \ge 0$.
Note that this distance is one-sided --- in general, $\bmdp\big( C(K), C(L)\big) \neq \bmdp\big( C(L), C(K)\big)$.
We study $\bmdp\big( C(K), C[0,\omega]\big)$ and $\bmdp\big( C[0,\omega], C(K)\big)$, where $K=[0,\omega] \times k$ for a natural number $k$ (in the current version of the preprint equal to $2$). 

Our main goal in this preprint is to answer the following questions.

\begin{question}{\cite{CHRS26}*{Question 0.2.}} \label{q:1}
What are the values of $\bmdp\big( C[0,\omega], C([0,\omega] \times 2)\big)$ and $\bmdp\big( C([0,\omega] \times 2), C[0,\omega]\big)$?
\end{question}

\begin{question}{\cite{CHRS26}*{Question 0.3.}}\label{q:2}
Are there countable ordinals $\omega \le \alpha < \beta$ such that 
\[\bmd\big( C[0,\alpha], C[0,\beta]\big) <
\min\{\bmdp\big( C[0,\alpha], C[0,\beta]\big), \bmdp\big( C[0,\beta], C[0,\alpha]\big)\}?\]
\end{question}

 In Sections \ref{sec:1->2} and \ref{sec:2->1}, we show that 
 \[\bmdp\big( C[0,\omega], C([0,\omega] \times 2)\big) > 3\mbox{ and }
 \bmdp\big( C([0,\omega] \times 2), C[0,\omega]\big)>3.\]
Since it is well-known, see Gordon  \cite{Go70},
 that $\bmd\big( C[0,\omega], C([0,\omega] \times 2)\big) = 3$, 
this answers Question \ref{q:2} and makes a  progress on Question \ref{q:1}.

\section{Preliminaries}
 For two compact spaces $K, L$, we call an isomorphism $T : C(K) \to C(L)$ norm-increasing if $\|Tf\| \ge \|f\|$ for all $f \in C(K)$.
We will consider only norm-increasing isomorphisms, since any isomorphism can be rescaled to have this property.
For each $y\in L$, we write $\nu_y$ for the measure on $K$ defined for $g\in C(K)$ by $\nu_y(g)=Tg(y)$.
Note that the measures $\nu_y$, for $y\in L$, form a 1-norming subset of $M(K)$.
If additionally $T$ is positive, then every measure $\nu_y$ is positive.

In the case $L=[0,\omega]$, we write $\nu_i$ for $i\in [0,\omega]$, and denote $\nu_\omega$ simply by $\nu$. 
Note that $\nu_i\to\nu$ in the $weak^\ast$ topology on $M(K)$.

We mostly consider the case $K = [0,\omega] \times 2$, $L=[0,\omega]$ and $T : C(K) \to C(L)$. 
To estimate $\bmdp\big( C([0,\omega] \times 2), C[0,\omega]\big)$, we will assume that $T$ is positive. 
Similarly, for estimating $\bmdp\big( C[0,\omega], C([0,\omega] \times 2)\big)$, we will assume that $T^{-1}$ is positive.

Let us fix notation for the remainder of the paper.
Consider a norm-increasing isomorphism $T: C([0,\omega] \times 2) \to C[0,\omega]$ and denote $t = \|T\|$.

Given  $m \in \{0, 1\}$, 
denote the singleton of the endpoint as $d_m=\{(\omega, m)\}$. 
Write $\nu(d_m)=\theta_m$, so that
\[ \nu=\nu'+\theta_0\cdot \delta_{(\omega,0)}+\theta_1\cdot \delta_{(\omega,1)},\]
where $\nu'$ vanishes on both endpoints. Put $\theta' = \|\nu'\|$.

We will commonly use the asymptotic symbol $\lesssim$ in the following sense: 
$a\lesssim b$ means that the real-valued functions $a$ and $b$ defined for $\eps>0$ satisfy $\lim_{\eps\to 0^+} a(\eps)\le \lim_{\eps\to 0^+} b(\eps)$.

We write  $A_m(n) = [n, \omega] \times \{m\}$ for $n \in \omega$, and denote by 
 $\chi_A$  the characteristic function of a set $A$.
Further,  $e_x$ stands for the basic function equal to $1$ at $x$ and $0$ elsewhere.

If  we  choose $n_0$ large enough then $|\nu|(A_m(n_0)) \approx \theta_m$, and this means 
that  the measure $\nu'$ is essentially supported by the complement of $A_0(n_0)\cup A_1(n_0)$.
We define two parameters:
\[ s_m=\max\big(\sup_i |\nu_i(d_m)|, |\theta_m| \big),\quad  m = 0, 1.\]

\section{On \texorpdfstring{$\bmdp\big( C([0,\omega] \times 2), C[0,\omega]\big)$}{dBM+(C([0, ω]x2), C[0, ω])}} \label{sec:2->1}

In this section, we consider a norm-increasing positive isomorphism 
\[ T:C([0,\omega] \times 2) \to C[0,\omega],\]
 and estimate  $t = \|T\|$ from below. 
 
 Recall  that in such a  case the measures $\nu_i$ are nonnegative.
This fact is essential for the  following two lemmas.

\begin{lemma}\label{2:1} 
We have
\[ \theta_1\ge 1+\frac{\theta_0}{s_0}\mbox{ and } \theta_0\ge 1+\frac{\theta_1}{s_1}.\]
\end{lemma}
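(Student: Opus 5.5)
The plan is to exploit positivity of the $\nu_i$ together with the norm-increasing property $\sup_i|\nu_i(f)|\ge\|f\|$, tested on cleverly chosen clopen step functions. I will also use the weak$^\ast$ convergence $\nu_i\to\nu$ in the form $\nu_i(A_m(n))\to\nu(A_m(n))$ (the $\chi_{A_m(n)}$ are continuous). By symmetry it suffices to prove $\theta_1\ge 1+\theta_0/s_0$. Fix $\eps>0$ and $n_0$ so large that $\nu(A_m(n))\approx\theta_m$ for all $n\ge n_0$ and $m=0,1$; all inequalities below are meant in the $\lesssim$ sense.

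The first step extracts "mass near $d_0$". For $k\ge n_0$ I apply the norm-increasing property to $e_{(k,0)}$. This gives an index $i_k$ with $\nu_{i_k}(\{(k,0)\})\ge 1$, using positivity to drop the absolute value. For each fixed $i$ we have $\nu_i(\{(k,0)\})\to0$ as $k\to\infty$, and the same holds for $\nu$. Hence $i_k\to\infty$, so $\nu_{i_k}\to\nu$ weak$^\ast$.

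The second step tests the norm-increasing property on functions of the form
\[ f=\chi_{A_1(n)}-\lambda\,\chi_{A_0(n)}+\mu\, e_{(k,0)}, \]
with $\lambda,\mu\ge0$ chosen so that $\|f\|$ is attained at $(k,0)$ and exceeds the values on $A_1(n)$. The intended choice is $\lambda=1/s_0$, so that the term $\lambda\nu_i(d_0)\le 1$ is controlled by the definition of $s_0$. This is done after approximating $\nu_i(d_0)$ by $\nu_i(A_0(m))-\nu_i(A_0(m)\setminus d_0)$ for large $m$. Positivity splits $\nu_i(f)$ into a nonnegative part $\nu_i(A_1(n))+\mu\nu_i(\{(k,0)\})$ and a nonpositive part $-\lambda\nu_i(A_0(n))$. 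The norm-increasing inequality then produces an index $j$ at which either the positive part or the negative part is large. In the good case it yields $\nu_j(A_1(n))\gtrsim 1+\lambda\,\nu_j(A_0(n))$. Letting $k\to\infty$ forces $j\to\infty$, and passing to the limit gives $\theta_1\ge 1+\theta_0/s_0$.

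The main obstacle is the bad case, where the supremum of $|\nu_i(f)|$ is attained at an index at which the negative term $-\lambda\nu_i(A_0(n))$ dominates. I plan to rule this out with the bound $\nu_i(A_0(n))\lesssim \nu_i(d_0)+(\text{small})\le s_0$ for $i$ large, which holds because $\nu_i(A_0(n))\to\theta_0\le s_0$. For small $i$ the contribution on $A_0(n)$ is negligible once $n$ is large, by countable additivity applied to finitely many measures. Choosing $\lambda=1/s_0$ then makes the negative part at most about $1$, which is strictly below $\|f\|$, so the supremum must come from the positive part. If this balancing does not close, I would instead take $f=\chi_{A_1(n)}-\tfrac1{s_0}\chi_{A_0(n)}+(1+\tfrac1{s_0})e_{(k,0)}$ and compare the two indices $i_k$ and the limit point directly.
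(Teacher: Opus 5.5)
Your key step does not go through. Take $f=\chi_{A_1(n)}-\lambda\chi_{A_0(n)}+\mu e_{(k,0)}$ with $\|f\|=\mu-\lambda>1$. An index $j$ with $\nu_j(f)\ge\|f\|$ only gives
\[\nu_j(A_1(n))-\lambda\,\nu_j(A_0(n))+\mu\,\nu_j(\{(k,0)\})\ge \mu-\lambda .\]
To extract $\nu_j(A_1(n))\gtrsim 1+\lambda\,\nu_j(A_0(n))$ from this, you would need $\nu_j(\{(k,0)\})\le 1-(1+\lambda)/\mu$, and nothing forces that. Worse, your claim that $j\to\infty$ as $k\to\infty$ needs $\nu_j(\{(k,0)\})$ bounded away from $0$. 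The natural norming index is one that norms the spike, with $\nu_j(\{(k,0)\})\gtrsim 1$, and then the spike term absorbs all of $\|f\|$ and says nothing about $\theta_1$. Concretely, the values $\nu_j(A_1(n))=\nu_j(A_0(n))=\nu_j(\{(k,0)\})=1$ with $\lambda=1/s_0=1$ satisfy the display for every $\mu$. So these tests cannot exclude $\theta_0=\theta_1=s_0=1$, which is exactly what the lemma rules out. Your fallback with $\mu=1+1/s_0$ has the same defect, and Step~1 only yields $\theta_0\ge 1$.

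The missing idea is to put the spike on side $1$ and the negative block on side $0$. That is, test $f_n=e_{(n,1)}-\frac{1}{s_0+\eps}\chi_{A_0(n_0)}$, as the paper does; there the spike does both jobs at once.

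By positivity, $\nu_i(f_n)\le\nu_i(e_{(n,1)})$. So an index with $\nu_i(f_n)\ge 1$ must carry mass at least $1$ at $(n,1)$, which forces $i(n)\to\infty$. That same mass is bounded by $\nu_{i(n)}(A_1(n_0))\to\nu(A_1(n_0))\approx\theta_1$. Together with $\nu_{i(n)}(A_0(n_0))\to\nu(A_0(n_0))\ge\theta_0$, this gives $\theta_1\gtrsim 1+\theta_0/(s_0+\eps)$.

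The other alternative, $\nu_i(f_n)\le -1$, is excluded once $\sup_i\nu_i(A_0(n_0))<s_0+\eps$. Your bad-case discussion aims at this, but as stated it is inaccurate. For fixed $i$ one has $\nu_i(A_0(n))\to\nu_i(d_0)$, which is not negligible (only $\le s_0$). Also, your split into small and large $i$ uses a threshold that depends on $n$. The clean statement is $\lim_n\sup_i\nu_i(A_0(n))\le s_0$, proved with near-maximizing indices $i_n$:
\begin{itemize}
\item if some index repeats infinitely often, the limit is at most $\nu_i(d_0)\le s_0$;
\item if $i_n\to\infty$, the limit is at most $\nu(A_0(m))$ for every $m$, hence at most $\theta_0\le s_0$.
\end{itemize}
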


\begin{proof}  
Fix $\eps > 0$.
To check the first inequality, consider the function
\[ f_n=e_{(n,1)}- \frac{1}{s_0+\eps}\chi_{A_0(n_0)}.\]

Since $\nu_i \to \nu$ in the $weak^\ast$ topology, there is $i_0$ such that $\nu_i (A_0(n_0)) \approx \theta_0$ for all $i\ge i_0$. 
If $n$ is sufficiently large, then $\nu_i(e_{(n,1)}) \approx 0$ for all $i < i_0$. 
It follows that no measure $\nu_i$ with $i < i_0$ can norm the function $f_n$. 
Hence there is $i(n)\ge i_0$ such that $\nu_{i(n)}(f_n) \ge 1$, and thus
\[ \nu_{i(n)}(e_{(n,1)}) - \frac{\theta_0}{s_0+\eps} \gtrsim 1.\] 
Note that $i(n) \to \infty$, so 
\[ \nu_{i(n)}(e_{(n,1)})\le \nu_{i(n)}(A_1(n_0))\approx \theta_1,\]
and hence
\[ \theta_1-\frac{\theta_0}{s_0+\eps}\gtrsim 1.\] 
This yields  the first inequality and the argument for the second one is symmetric.
\end{proof}

The following lemma is a variation of \cite{KP25}*{Lemma 5.6}.

\begin{lemma}\label{2:2} 
If $s_0>\theta_0, s_1>\theta_1$, then 
\[ \|\nu'\|\ge 1-\Big(\theta_0\frac{t-s_0-1}{s_0}+ \theta_1\frac{t-s_1-1}{s_1}\Big).\]
\end{lemma}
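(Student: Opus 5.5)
The plan is to test the norm-increasing property of $T$ on a function that is ``punished'' on neighbourhoods of the two endpoints. Fix $\eps>0$, and choose $n_0$ so that $\nu(A_m(n_0))\approx\theta_m$ and $\nu'$ is essentially supported by $K_0:=K\setminus(A_0(n_0)\cup A_1(n_0))$. Put
\[\lambda_m=\frac{t-s_m-1}{s_m},\qquad m=0,1.\]
Choose $g\in C(K)$ supported in $K_0$ with $\|g\|\le 1$ and $\nu'(g)\approx\|\nu'\|$. Since $\nu_i\ge 0$, we may in fact take $g=\chi_{K_0}$. For large $n\ge n_0$ consider the continuous function
\[ f_n = g - \lambda_0\chi_{A_0(n)} - \lambda_1\chi_{A_1(n)}. \]
If some $\lambda_m$ is negative or larger than $1$, I would first treat it separately: either the claimed bound is trivial, or one caps $\lambda_m$ and uses Lemma \ref{2:1} to compare the quantities. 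In the main case $\|f_n\|\ge 1$, because $g$ takes the value $1$ somewhere. Hence some $\nu_{i(n)}$ (or $\nu$ itself) satisfies $|\nu_{i(n)}(f_n)|\gtrsim 1$.

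The key step is to show that the norming index cannot be one where the mass of $\nu_{i}$ on an endpoint is large. Using positivity and $\|\nu_i\|\le t$, for every $i$ we get
\[\nu_i(f_n)\le \nu_i(K_0)-\lambda_0\nu_i(A_0(n))-\lambda_1\nu_i(A_1(n)) \le t-(1+\lambda_0)\nu_i(A_0(n))-(1+\lambda_1)\nu_i(A_1(n)),\]
and this falls below $1$ precisely when the endpoint masses are close to the extremal value $s_m$; this is what the choice $(1+\lambda_m)s_m=t-1$ is designed for. The lower bound $\nu_i(f_n)\ge -\lambda_0\nu_i(A_0(n))-\lambda_1\nu_i(A_1(n))$ should likewise keep $-\nu_i(f_n)$ below $1$, again because $\lambda_m s_m<1$ in the relevant range.

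The hypothesis $s_m>\theta_m$ is used here. For $i$ bounded we have $\nu_i(A_m(n))\to\nu_i(d_m)$ as $n\to\infty$, while for $i\to\infty$ the mass $\nu_i(A_m(n))$ is governed by $\nu$, namely $\approx\theta_m$ plus leakage. I would therefore argue, exactly as in the proof of Lemma \ref{2:1}, that after choosing $n$ large enough the norming index $i(n)$ tends to infinity. In the limit this yields
\[1\lesssim \nu(K_0)-\lambda_0\theta_0-\lambda_1\theta_1\approx\|\nu'\|-\theta_0\frac{t-s_0-1}{s_0}-\theta_1\frac{t-s_1-1}{s_1},\]
which is the claimed inequality.

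I expect the main obstacle to be excluding norming by measures $\nu_i$ with large $i$ whose endpoint masses lie strictly between $\theta_m$ and $s_m$. Mass of $\nu_i$ can sit on points $(k,m)$ with $n_0\le k<n$ and escape to $d_m$ only in the limit. If the bound above is not sharp enough for such $i$, I would first try the refinement from \cite{KP25}*{Lemma 5.6}: replace $\chi_{A_m(n)}$ by the weighted sum $\sum_k\lambda_m e_{(k,m)}$, which behaves the same on the tail, and control the intermediate masses by the defining supremum $s_m\ge\sup_i\nu_i(d_m)$. The strict inequalities $s_m>\theta_m$ leave room to absorb the $\eps$-errors.
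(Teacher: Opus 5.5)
There is a genuine gap, and it already shows in your last display. The inequality you reach, $1\lesssim \|\nu'\|-\theta_0\lambda_0-\theta_1\lambda_1$, says $\|\nu'\|\gtrsim 1+\theta_0\lambda_0+\theta_1\lambda_1$. The lemma asserts $\|\nu'\|\ge 1-(\theta_0\lambda_0+\theta_1\lambda_1)$, which is a different statement, and your version is false in general.

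Here is an example. Let $\nu_0=r(3\delta_{(\omega,0)}+2\delta_{(\omega,1)})$, $\nu_{2k+1}=r(\delta_{(k,0)}+\delta_{(\omega,1)})$, $\nu_{2k+2}=r(\delta_{(\omega,0)}+\delta_{(k,1)})$ and $\nu_\omega=r(\delta_{(\omega,0)}+\delta_{(\omega,1)})$. This defines a positive isomorphism of $C([0,\omega]\times 2)$ onto $C[0,\omega]$, because the $2\times 2$ system at the endpoints given by $\nu_0$ and $\nu_\omega$ is invertible. It is norm-increasing for a suitable $r\ge 1$. In it $\nu'=0$, $t=5r$, $s_0=3r>\theta_0=r$, $s_1=2r>\theta_1=r$, and $\lambda_0,\lambda_1>0$. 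Your conclusion would then read $0\gtrsim 1+\theta_0\lambda_0+\theta_1\lambda_1>1$.

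The step that breaks is the claim that the norming index $i(n)$ tends to infinity. In the example, $|\nu_0(f_n)|=3r\lambda_0+2r\lambda_1=5r-2\ge\|f_n\|$ for every $n$. In Lemma \ref{2:1} the positive part $e_{(n,1)}$ of the test function escapes to infinity; the positive part $\chi_{K_0}$ of your $f_n$ does not. On the negative side, $\lambda_m s_m=t-s_m-1$ need not be below $1$. So nothing prevents a fixed $\nu_i$ from norming all the $f_n$, and testing $\|Tf\|\ge\|f\|$ on a chosen $f$ gives no lower bound on $\|\nu'\|$.

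The missing idea is to use surjectivity of $T$ instead.

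1. Positivity and $\nu_i\to\nu$ give $\limsup_i\nu_i(d_m)\le\lim_n\nu(A_m(n))=\theta_m$. Hence the hypothesis $s_m>\theta_m$ forces $s_m=\nu_{i_m}(d_m)$ for some finite index $i_m$.
2. Put $g=T^{-1}\big(1-2\chi_{\{i_0,i_1\}}\big)$. Since $T$ is norm-increasing, $\|g\|\le 1$.
3. Splitting off the endpoint mass, $-1=\nu_{i_m}(g)\ge s_m\,g(\omega,m)-(t-s_m)$, which gives $g(\omega,m)\le\lambda_m$.
4. Then $1=\nu(g)=\nu'(g)+\theta_0 g(\omega,0)+\theta_1 g(\omega,1)$ with $\theta_m\ge 0$ yields $\|\nu'\|\ge\nu'(g)\ge 1-\theta_0\lambda_0-\theta_1\lambda_1$.

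The extremal function has endpoint values at most $+\lambda_m$. That is where the minus sign in the lemma comes from. A test function carrying $-\lambda_m$ on the tails is aimed at the wrong inequality.
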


\begin{proof}
Fix $\eps>0$.
Without loss of generality, assume that
\[ s_0=\nu_{i_0}(d_0)\text{ and } s_1=\nu_{i_1}(d_1),\]
where $i_0, i_1$ are not necessarily distinct.

We pick a function $g\in C\big([0,\omega] \times 2\big)$ such that $\nu_{i_m}(g)=-1$ for $m=0,1$ and $\nu_i(g)=1$ for every $i\neq i_0, i_1$. 
Since $T$ is norm-increasing, we have $\|g\|\le 1$; denote $g_m=g(\omega, m)$.

For each $m = 0, 1$, let $\mu_{i_m}$ denote the part of $\nu_{i_m}$ living outside $d_m$. Then $\|\mu_{i_0}\|\le t-s_0$ and $\|\mu_{i_1}\|\le t-s_1$, so 
\[ -1=\nu_{i_0}(g)=\mu_{i_0}(g)+g_0\nu_{i_0}(d_0)\ge g_0\nu_{i_0}(d_0)-(t-s_0),\]
\[ -1=\nu_{i_1}(g)=\mu_{i_1}(g)+g_1\nu_{i_1}(d_1)\ge g_1\nu_{i_1}(d_1)-(t-s_1),\]
thus
\[ g_0\le \frac{t-s_0-1}{s_0} \text{ and } g_1\le \frac{t-s_1-1}{s_1}.\]
On the other hand,
\[ 1=\nu(g)=\nu'(g)+g_0\theta_0+g_1\theta_1,\]
\[ \|\nu'\|\ge \nu'(g)=1-\big(g_0\theta_0+g_1\theta_1\big)\ge 1-\Big(\theta_0\frac{t-s_0-1}{s_0}+\theta_1\frac{t-s_1-1}{s_1}\Big),\]
and we are done.
\end{proof}

\begin{theorem}\label{2:3}
\[\bmdp\big( C([0,\omega] \times 2), C[0,\omega]\big)\ge \frac{3+\sqrt{17}}{2}>3.56.\]
\end{theorem}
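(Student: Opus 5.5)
The plan is to combine the two lemmas with the trivial bound coming from positivity. Since $T$ is positive, $\nu=\nu_\omega$ is a positive measure with $\|\nu\|\le\|T\|=t$. Positivity also gives $\nu'\ge 0$ and $\theta_0,\theta_1\ge 0$, so
\[ \theta'+\theta_0+\theta_1=\|\nu\|\le t .\]
Similarly $s_m\le t$ for $m=0,1$, because each $\nu_i(d_m)\le\|\nu_i\|\le t$. The target value $\frac{3+\sqrt{17}}{2}$ is the positive root of $t^2-3t-2=0$, that is, of $t=3+2/t$. So the aim is to show $t\ge 3+2/t$ in every case.

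\emph{Case 1: $s_m=\theta_m$ for some $m$, say $m=0$.} Lemma \ref{2:1} then gives $\theta_1\ge 1+\theta_0/\theta_0=2$. Here $\theta_0>0$, since otherwise $s_0=0$ and the first inequality of Lemma \ref{2:1} is impossible for finite data; if necessary I will rerun the lemma's argument with $s_0+\eps$ in place of $s_0$. The second inequality of Lemma \ref{2:1} together with $s_1\le t$ gives $\theta_0\ge 1+\theta_1/s_1\ge 1+2/t$. Hence
\[ t\ge\theta_0+\theta_1\ge 3+\frac2t ,\]
which is exactly $t\ge\frac{3+\sqrt{17}}{2}$. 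The case $m=1$ is symmetric.

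\emph{Case 2: $s_0>\theta_0$ and $s_1>\theta_1$.} Now Lemma \ref{2:2} applies. Combining it with $\theta'+\theta_0+\theta_1\le t$ yields
\[ t\ \ge\ 1+\theta_0\,\frac{2s_0+1-t}{s_0}+\theta_1\,\frac{2s_1+1-t}{s_1}. \]
I then want to feed in the lower bounds $\theta_0\ge 1+\theta_1/s_0$ and $\theta_1\ge 1+\theta_0/s_1$ from Lemma \ref{2:1}, together with $\theta_m\le s_m\le t$, and show that $t<\frac{3+\sqrt{17}}{2}$ leads to a contradiction.

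The main obstacle is this optimization. The sign of the coefficients $2s_m+1-t$ matters. If one of them is negative, I expect to get a contradiction by going back to Lemma \ref{2:1} combined with $\theta_0+\theta_1\le t$, perhaps after a further split. Adding the two inequalities of Lemma \ref{2:1} with $s_m\le t$ gives $\theta_0+\theta_1\ge 2t/(t-1)$, which only yields $t\ge 3$ by itself, so the extra information must come from Lemma \ref{2:2}. If both coefficients are nonnegative, the right-hand side is increasing in each $\theta_m$, so I substitute the minimal admissible values from Lemma \ref{2:1}.

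A sanity check in the symmetric situation $s_0=s_1=s$, $\theta_0=\theta_1=\theta$ supports this. There Lemma \ref{2:1} gives $\theta\ge s/(s-1)$, and hence $s\ge 2$. The inequality becomes $t\ge 1+2(2s+1-t)/(s-1)=5+(6-2t)/(s-1)$, whose right-hand side increases in $s$ for $t>3$. So the worst case is $s=2$, giving $t\ge 11/3>\frac{3+\sqrt{17}}{2}$. I would aim to reduce the general asymmetric case to this monotonicity argument by treating the bound as a function of $(s_0,s_1)$ and checking that its minimum is attained at the boundary $s_m=\theta_m$. That boundary is Case 1, and Case 1 gives the stated constant.
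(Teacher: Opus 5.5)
Your Case 1 is correct and is the paper's argument for the case $s_0=\theta_0$. The paper also treats $s_0=\theta_0$, $s_1=\theta_1$ separately and gets $t\ge 4$ there; your argument covers that case too, with the weaker bound.

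The gap is Case 2, which as submitted is not proved. You correctly derive
\[ t\ \ge\ 1+\theta_0\,\frac{2s_0+1-t}{s_0}+\theta_1\,\frac{2s_1+1-t}{s_1}, \]
but then you only sketch an optimization and never carry it out. The pieces of that sketch are all unproved:
\begin{itemize}
\item the sign split on $2s_m+1-t$, where for the negative case you only say you ``expect'' a contradiction;
\item the substitution of ``minimal admissible'' values of $\theta_m$, which by Lemma~\ref{2:1} depend jointly on $s_0$ and $s_1$ and leave a two-parameter problem you do not solve;
\item the symmetric sanity check;
\item the claim that the minimum over $(s_0,s_1)$ sits on the boundary $s_m=\theta_m$.
\end{itemize}
In addition, in Case 2 you quote Lemma~\ref{2:1} as $\theta_0\ge 1+\theta_1/s_0$ and $\theta_1\ge 1+\theta_0/s_1$. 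The lemma actually says $\theta_1\ge 1+\theta_0/s_0$ and $\theta_0\ge 1+\theta_1/s_1$, which is how you correctly used it in Case 1. This pairing is exactly what makes the case easy.

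The missing idea is to stop viewing the coefficients as functions of $s_m$. Write $\theta_m\frac{2s_m+1-t}{s_m}=2\theta_m-(t-1)\frac{\theta_m}{s_m}$. Then read Lemma~\ref{2:1} as \emph{upper} bounds on these same ratios: $\theta_0/s_0\le\theta_1-1$ and $\theta_1/s_1\le\theta_0-1$. Since $t-1>0$, this eliminates $s_0$ and $s_1$ entirely:
\[ t\ \ge\ 1+2(\theta_0+\theta_1)-(t-1)\big(\theta_0+\theta_1-2\big)=2t-1-(t-3)(\theta_0+\theta_1). \]
Hence $(t-3)(\theta_0+\theta_1)\ge t-1>0$, which forces $t>3$. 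Then $\theta_0+\theta_1\le t$ gives $t(t-3)\ge t-1$, that is, $t^2-4t+1\ge 0$, so $t\ge 2+\sqrt3>\frac{3+\sqrt{17}}{2}$.

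This is the paper's argument. It needs no sign analysis and no boundary optimization, and it shows that Case 2 never determines the constant. Your Case 1 does.
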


\begin{proof}
If $s_0=\theta_0$ and $s_1=\theta_1$, then it follows from Lemma \ref{2:1} that $\theta_0,\theta_1\ge 2$, and hence $t\ge 4$.

In the mixed case, suppose that $s_0=\theta_0$. Then Lemma \ref{2:1} gives 
\[ \theta_1\ge 2, \theta_0\ge 1+\frac{2}{t}, \mbox{ so}\]
\[t\ge\theta_0+\theta_1\ge 3+\frac{2}{t}.\]
Then $t^2 - 3t - 2 \ge 0$, which gives $t\ge (3+\sqrt{17})/2$.

It remains to consider the case where $s_0>\theta_0$, $s_1>\theta_1$. Then Lemmas \ref{2:1} and \ref{2:2} give
\numberwithin{equation}{section}

\begin{equation}\label{e1}
\theta_1\ge 1+\frac{\theta_0}{s_0}
\end{equation}

\begin{equation}\label{e2}
\theta_0\ge 1+\frac{\theta_1}{s_1}
\end{equation}

\begin{equation}\label{e3}
\|\nu'\|\ge 1-\Big(\theta_0\frac{t-s_0-1}{s_0}+ \theta_1\frac{t-s_1-1}{s_1}\Big)
\end{equation}

Equation \eqref{e3} is equivalent to 
\[ \|\nu'\|\ge 1+\theta_0+\theta_1-(t - 1)\Big(\frac{\theta_0}{s_0}+ \frac{\theta_1}{s_1}\Big),\]
so, using \eqref{e1} and \eqref{e2},
\[ \|\nu'\|\ge 1+\theta_0+\theta_1-(t - 1)\big(\theta_0+\theta_1-2\big)=2t - 1-\big(\theta_0+\theta_1\big)(t-2),\]

Since $t\ge \|\nu\|=\|\nu'\|+\theta_0+\theta_1$, it follows that
\[t\ge 2t - 1- \big(\theta_0+\theta_1\big)(t-3), \mbox{ that is}\]
\[ \theta_0+\theta_1\ge \frac{t - 1}{t-3}.\]

Finally,
\[t\ge \theta_0+\theta_1\ge \frac{t - 1}{t-3},\]
so $t^2-4t+1\ge 0$, which implies 
\[ t\ge 2+\sqrt{3}\quad \Big(>\frac{3+\sqrt{17}}{2}\Big).\]
Thus the smallest lower bound arises from the mixed case.
\end{proof}

\section{On \texorpdfstring{$\bmdp\big( C[0,\omega], C([0,\omega] \times 2)\big)$}{dBM+(C[0, ω], C([0, ω]x2))}}\label{sec:1->2}

We now consider a norm-increasing isomorphism $T: C([0,\omega] \times 2) \to C[0,\omega]$ such that $T^{-1}$ is positive. 
In other words, we assume that $Tf\ge 0$ implies $f\ge 0$.
We first prove   the following general fact, 
which will be especially useful in the case where either $\theta_0$ or $\theta_1$ is negative.

\begin{lemma}\label{3:0}
Suppose that $h\in C([0,\omega] \times 2)$ and $h(n,1)=1$ for some $n$.

Then there is $i$ such that $\nu_i(h)\ge 1$.
\end{lemma}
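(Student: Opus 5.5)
Here $T$ is a norm-increasing isomorphism $C([0,\omega]\times 2)\to C[0,\omega]$ and $T^{-1}$ is positive, i.e. $Tf\ge 0$ implies $f\ge 0$. We must show that $\sup_i \nu_i(h) = \sup_i Th(i)\ge 1$. The natural approach is by contradiction, combining positivity of $T^{-1}$ with the function $h$ itself.

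Suppose $\nu_i(h)<1$ for every $i\in[0,\omega]$. Since $Th$ is continuous on the compact space $[0,\omega]$, its supremum is attained. Hence there is $\delta>0$ with $Th(i)\le 1-\delta$ for all $i$. Let $\mathbf 1$ denote the constant function $1$ on $[0,\omega]$, and define
\[ g=T^{-1}\big((1-\delta)\mathbf 1\big)-h \in C([0,\omega]\times 2).\]
Then $Tg=(1-\delta)\mathbf 1 - Th\ge 0$ pointwise. By positivity of $T^{-1}$ this gives $g\ge 0$, so $h\le (1-\delta)\,T^{-1}\mathbf 1$ pointwise. Evaluating at $(n,1)$ yields $1\le (1-\delta)\,(T^{-1}\mathbf 1)(n,1)$.

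The remaining task is to bound $T^{-1}\mathbf 1$ pointwise by $1$, which is where the norm-increasing hypothesis enters. Since $\|Tf\|\ge\|f\|$ for every $f$, we have $\|T^{-1}\|\le 1$, so $\|T^{-1}\mathbf 1\|\le \|\mathbf 1\|=1$. In particular $(T^{-1}\mathbf 1)(n,1)\le 1$, and we get $1\le 1-\delta<1$, a contradiction. Therefore some $i$ satisfies $\nu_i(h)\ge 1$.

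The only point needing care is the passage from "$\nu_i(h)<1$ for all $i$" to a uniform gap $\delta$. This is immediate because $[0,\omega]$ is compact and $Th$ is continuous. If one prefers to avoid it, one can instead apply positivity to $\mathbf 1 - Th$ to get $h\le T^{-1}\mathbf 1\le 1$, and then take limits. Beyond this, the proof uses only positivity of $T^{-1}$ and $\|T^{-1}\|\le 1$, and in particular no information on $\theta_0,\theta_1$.
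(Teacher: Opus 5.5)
Your proof is correct. It uses the same two ingredients as the paper. Positivity of $T^{-1}$ turns an upper bound on $Th$ into a pointwise majorant of $h$, and $\|T^{-1}\|\le 1$ bounds that majorant. What differs is the choice of majorant.

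The paper argues directly with $\widehat h=T^{-1}\big((Th)^+\big)$. From $T(\widehat h-h)=(Th)^-\ge 0$ it gets $\widehat h\ge h$, hence $\|\widehat h\|\ge 1$. The norm-increasing property then gives $\max_i (Th)^+(i)\ge 1$.

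You pull back a constant instead, so only the single function $T^{-1}\mathbf 1$ is involved and no lattice operation in $C[0,\omega]$ is needed. The cost is an argument by contradiction that relies on a uniform gap. Compactness enters both proofs, since the paper likewise uses that $\|T\widehat h\|$ is attained.

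Your idea also runs directly. Let $s=\max_i Th(i)$. If $s<0$, then $T(-h)\ge 0$ forces $h\le 0$, which is impossible. If $s\ge 0$, positivity applied to $s\mathbf 1-Th$ gives $1=h(n,1)\le s\,(T^{-1}\mathbf 1)(n,1)\le s$.

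Two small remarks. First, your last step needs $1-\delta\ge 0$. This is harmless, since $\delta$ can be shrunk, but you should say so. Second, your closing alternative does not work as stated. Positivity applied to $\mathbf 1-Th$ yields only $h(n,1)\le 1$, which contradicts nothing. The strict gap, or the direct version with $s$, is exactly what is needed.
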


\begin{proof}
There is $\wh{h}\in C([0,\omega] \times 2)$ such that, for every $i$, $\nu_i(\wh{h})=\nu_i(h)$ whenever $\nu_i(h)\ge 0$ and $\nu_i(\wh{h})=0$ otherwise. 

Then $\nu_i(\wh{h}-h)\ge 0$ for every $i$. Since $T^{-1}\ge 0$, we have 
\[\wh{h}-h\ge 0, \text{ so } \|\wh{h}\|\ge \wh{h}(n,1)\ge h(n,1)=1.\]

Thus there is $i$ satisfying $|\nu_i(\wh{h})|\ge 1$ which means $\nu_i(h)\ge 1$.
\end{proof}

The further analysis  depends  on the signs of $\theta_0$ and $\theta_1$, so we consider two cases separately.

\subsection{Opposite signs} \label{ssec:op}
Consider the case $\theta_0 \ge 0 \ge \theta_1$.
In this case,  Lemma \ref{3:0} yields the following.

\begin{proposition}\label{3:2}
Suppose that $\theta_0 \ge 0 \ge \theta_1$. Then
\[ t\ge 2\Big( 1+\frac{\|\nu\|}{t} \Big)+\|\nu\|.\]
\end{proposition}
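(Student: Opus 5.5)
The plan is to locate, for large $n$, an index $i$ whose measure $\nu_i$ carries a large positive atom at $(n,1)$, and then use $\theta_1\le 0$ to force an equally large compensating negative mass near the endpoint $(\omega,1)$. Adding the mass of $\nu_i$ away from $A_1(n_0)$, which is asymptotically at least that of $\nu$, should give $t\ge\|\nu_i\|\gtrsim 2a+\|\nu\|$, where $a$ is the size of the atom. The target is $a\gtrsim 1+\|\nu\|/t$. I have not checked this in detail, and the step I expect to cause trouble is flagged below.

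\emph{Step 1 (the atom).} Fix $\eps>0$ and $n_0$ as in the preliminaries, so that $|\nu|(A_m(n_0))\approx|\theta_m|$. Choose $k\in C([0,\omega]\times 2)$ with $\|k\|\le 1$ and $\nu(k)\gtrsim\|\nu\|$. Since $\theta_0\ge 0\ge\theta_1$, we may take $k\equiv 1$ on $A_0(n_0)$ and $k\equiv-1$ on $A_1(n_0)$. For $n\ge n_0$ put
\[ h_n=\lambda\Big(e_{(n,1)}-\frac{1-\eps}{t}\,k\Big), \]
where $\lambda>0$ is chosen so that $h_n(n,1)=1$. Lemma \ref{3:0} then yields $i(n)$ with $\nu_{i(n)}(h_n)\ge 1$. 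For any fixed $i_0$ and all sufficiently large $n$, the indices $i<i_0$ satisfy $\nu_i(e_{(n,1)})\approx 0$ and $|\nu_i(k)|\le t$, so $\nu_i(h_n)\lesssim\lambda(1-\eps)<1$ (this needs $\lambda\le 1$, see below). Hence $i(n)\to\infty$ and $\nu_{i(n)}(k)\to\nu(k)\approx\|\nu\|$. Unwinding the normalisation should give
\[ \nu_{i(n)}(e_{(n,1)})\gtrsim 1+\frac{\|\nu\|}{t}. \]

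\emph{Main obstacle.} The normalisation is the step I expect to be delicate. With $k(n,1)=-1$ we get $\lambda=(1+(1-\eps)/t)^{-1}<1$, so the small-$i$ argument goes through. However, the resulting atom bound is $1+\|\nu\|/t$ multiplied by $\lambda^{-1}$, adjusted by the $(n,1)$-term of $k$, and this has to be checked carefully. If it falls short, I would first try replacing $k$ by $k-\chi_{A_1(n_0)}\cdot k$ and compensating for the lost $|\theta_1|$ separately. Alternatively, I would take $k=T^{-1}$ of a suitable function, using $T^{-1}\ge 0$, to pin down the values $\nu_i(k)$ for all $i$ simultaneously.

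\emph{Step 2 (counting mass).} Let $a=\nu_{i(n)}(\{(n,1)\})\gtrsim 1+\|\nu\|/t$. By weak$^\ast$ convergence, $\nu_{i(n)}(A_1(n_0))\approx\theta_1\le 0$. So the negative part of $\nu_{i(n)}$ on $A_1(n_0)\setminus\{(n,1)\}$ has mass at least $a$, and $|\nu_{i(n)}|(A_1(n_0))\gtrsim 2a+|\theta_1|$. Off $A_1(n_0)$, lower semicontinuity of the norm under weak$^\ast$ convergence gives $|\nu_{i(n)}|\big(\text{complement of }A_1(n_0)\big)\gtrsim\|\nu\|-|\theta_1|$. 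Adding the two estimates gives $t\ge\|\nu_{i(n)}\|\gtrsim 2a+\|\nu\|$, and letting $\eps\to 0$ gives the inequality of Proposition \ref{3:2}.
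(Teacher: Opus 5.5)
Your proposal is correct and follows the paper's proof: Lemma \ref{3:0} is applied to $e_{(n,1)}$ minus about $\frac1t$ times a norming function for $\nu$ that is non-positive near $(\omega,1)$, which forces a large index with a big atom $a$ at $(n,1)$, and then the same mass count is done on $A_1(n_0)$ and its complement (there the negative mass of $\nu_{i(n)}$ on $A_1(n_0)\setminus\{(n,1)\}$ is really $\gtrsim a+|\theta_1|$, not just $a$, which is what gives $2a+|\theta_1|$). The normalisation you flagged causes no trouble: $\nu_{i(n)}(h_n)\ge 1$ gives $\nu_{i(n)}(e_{(n,1)})\ge \lambda^{-1}+\frac{1-\eps}{t}\,\nu_{i(n)}(k)\approx 1+\frac{1+\|\nu\|}{t}$, which is even stronger than the needed $1+\|\nu\|/t$ because the paper only makes its norming function asymptotically $\le 0$ at $(n,1)$ while you pin $k(n,1)=-1$; so none of your fallbacks is needed, and fed into Corollary \ref{3:3} this sharper bound would give $t\ge 4$ in that case.
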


\begin{proof}
We have $\|\nu\|=\theta_0 - \theta_1 + \theta'$ (where $\theta'=\|\nu'\|$).
Fix $\eps > 0$;
there is a continuous function $\vf$ on $[0,\omega]\times 2$
 such that $\|\vf\| \le 1$ and  $\nu(\vf) >\|\nu\| - 2\eps$.
Then there is  $i_0$ such that  $\nu_i(\vf) > \|\nu\| - \eps$ for all $i > i_0$.

Since $\theta_1 \le 0$, we can assume that $\lim_n \vf(n,1)\le 0$,
Fix some $n\ge n_0$ such that $\vf (n, 1) <\delta $ and $|\nu_i(n, 1)|<\delta $  for all $i \le i_0$;
$\delta>0$ will be fixed in a while.
We apply Lemma~\ref{3:0} to
\[ h=e_{(n,1)} - \frac{1}{t+\eps}\vf .\]

Then for $i \le i_0$ we have
\[\nu_i(h) = \nu_i(e_{(n,1)}) - \frac{1}{t+\eps}\nu_i(\vf) \le \delta + \frac{t}{t+\eps} < 1.\]
On the other hand, $\|h\|>1-\delta$, so,  by taking $\delta$ sufficiently small,
we can guarantee that $\nu_i(h)>1-\delta$ must hold for some $i\ge i_0$. 

Then 
\[ 1-\delta  \le \nu_i(h) = \nu_i(e_{(n,1)}) - \frac{1}{t+\eps}\nu_i(\vf)<  \nu_i(e_{(n,1)} ) - 
\frac{1}{t+1} (\|\nu\|-\eps).\]

To state it briefly: 
\[ \nu_i(e_{(n,1)})\gtrsim 1+\frac{\|\nu\|}{t}.\]

Since $n_0$ was chosen so that 
$\nu_i(A_1(n_0))\approx \theta_1\le  0$, we estimate $|\nu_i|(A_1(n_0))$ as
\[ |\nu_i|(A_1(n_0)) \gtrsim \nu_i(2e_{(n,1)} - \chi_{A_1(n_0)}) \gtrsim  2 \Big(1 + \frac{\|\nu\|}{t}\Big) - \theta_1,\]
and then $\|\nu_i\|$ as
\[ \|\nu_i\| \gtrsim  |\nu_i|(A_1(n_0)) + \theta_0+\theta'\gtrsim 2 \Big(1 + \frac{\|\nu\|}{t}\Big)+\|\nu\|.\]
\end{proof}

\begin{corollary}\label{3:3}
If $\theta_0 \ge 0 \ge \theta_1$, then $t \ge (3+\sqrt{17})/2$.
\end{corollary}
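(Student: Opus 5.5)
Corollary~\ref{3:3} should follow from Proposition~\ref{3:2} once we have a lower bound for $\|\nu\|$. The right-hand side $2(1+\|\nu\|/t)+\|\nu\|$ is increasing in $\|\nu\|$. So it suffices to show $\|\nu\|\ge 1$, which is immediate from the standing assumptions.

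The measures $\nu_i$, $i\in[0,\omega]$, form a $1$-norming family because $T$ is norm-increasing. Apply this to the constant function $\mathbf 1$ on $[0,\omega]\times 2$. Then $\|T\mathbf 1\|\ge 1$, so some $i$ satisfies $|\nu_i(\mathbf 1)|\ge 1$. This bounds only one $\nu_i$, and we need information about $\nu=\nu_\omega$.

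To handle this, we use positivity of $T^{-1}$ together with a functional chosen to isolate the limit point. Let $f=T^{-1}(\chi_{\{\omega\}})$, where $\chi_{\{\omega\}}$ is the indicator of the point $\omega$. Since $\omega$ is isolated in $[0,\omega]$, this indicator is continuous, so $f$ is well defined. Then:
\begin{itemize}
\item $\nu_i(f)=0$ for every $i<\omega$, and $\nu(f)=1$;
\item $f\ge 0$ by positivity of $T^{-1}$;
\item $f\ne 0$, since $T$ is injective;
\item $\|f\|\le \|T f\|=1$, since $T$ is norm-increasing.
\end{itemize}
Hence $1=\nu(f)\le\|\nu\|\,\|f\|\le\|\nu\|$, so $\|\nu\|\ge 1$. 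In fact norm-increasing alone gives this bound, because it implies $\|f\|\le 1$ directly.

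Substituting $\|\nu\|\ge 1$ into Proposition~\ref{3:2} gives
\[ t\ \ge\ 2+\frac{2}{t}+1 = 3+\frac2t. \]
Multiplying by $t>0$ yields $t^2-3t-2\ge 0$, and therefore $t\ge (3+\sqrt{17})/2$.

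The only delicate step is the bound $\|\nu\|\ge 1$: one has to check that the chosen test function isolates $\nu$ rather than some $\nu_i$ with $i<\omega$. Once that is in place, the rest is the same quadratic computation as in the mixed case of Theorem~\ref{2:3}.
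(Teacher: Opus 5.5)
Your overall route is the paper's: plug $\|\nu\|\ge 1$ into Proposition~\ref{3:2}, get $t\ge 3+2/t$, and solve $t^2-3t-2\ge 0$. However, your justification of $\|\nu\|\ge 1$ fails, and this is exactly the step you flagged as delicate. The point $\omega$ is not isolated in $[0,\omega]$. It is the unique non-isolated point, the limit of the sequence of natural numbers. So $\chi_{\{\omega\}}\notin C[0,\omega]$, and $T^{-1}(\chi_{\{\omega\}})$ is undefined.

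In fact no $f\in C([0,\omega]\times 2)$ can have the properties in your first bullet. Since $\nu_i\to\nu$ weak$^*$, the condition $\nu_i(f)=0$ for all $i<\omega$ forces $\nu(f)=0$.

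The repair is short. Take $f=T^{-1}(\mathbf 1)$, where $\mathbf 1$ is the constant function on $[0,\omega]$. Then $\|f\|\le\|Tf\|=1$ because $T$ is norm-increasing, and $\nu(f)=(Tf)(\omega)=1$. Hence $1=\nu(f)\le\|\nu\|\,\|f\|\le\|\nu\|$. The same argument gives $\|\nu_i\|\ge 1$ for every $i$. It uses only surjectivity of $T$ and the norm-increasing normalization, not positivity of $T^{-1}$.

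With this replacement the rest of your argument is correct and coincides with the paper's one-line proof, which simply asserts $\|\nu\|\ge 1$. That rest is the monotonicity of the right-hand side of Proposition~\ref{3:2} in $\|\nu\|$, giving $t\ge 3+2/t$, and hence $t\ge(3+\sqrt{17})/2$ since $t>0$.
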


\begin{proof}
Note that $\|\nu\|\ge 1$ so
Lemma \ref{3:2} gives
\[ t\ge 2\Big(1 + \frac{1}{t} \Big) +1= 3 + \frac{2}{t}. \]
Thus $t^2 - 3t - 2 \ge 0$ and $t \ge (3+\sqrt{17})/2$.
\end{proof}

\subsection{The same signs} \label{ssec:same}
We consider the case where $\theta_0\cdot\theta_1> 0$ but analyze
it under the additional assumption that $\nu'=0$.

Without loss of generality, assume that $|\theta_0|\ge |\theta_1|$, and consider  functions of the form
\[ f_n=\chi_{A_1(n)}-\theta_1/\theta_0\chi_{A_0(n)}.\]
For any $n$ we have  $\nu(f_n)= 0$ and, consequently,  $\nu_i(f_n)\approx 0$ holds for large $i$.

\begin{lemma}\label{3:4}
If $\theta_0\cdot \theta_1> 0$ then there is $k$ such that $|\nu_i(f_k)|  \lessapprox t - 1$ for {\bf all}  $i$.
\end{lemma}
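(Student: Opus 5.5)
The approach is to exploit the fact that $\nu(f_k)=0$ for every $k$, together with the weak$^\ast$ convergence $\nu_i\to\nu$ and the bound $\|\nu_i\|\le t$. Work in the setting of this subsection: $\nu'=0$, $\theta_0\theta_1>0$ and $|\theta_0|\ge|\theta_1|$, so that $|\theta_1/\theta_0|\le 1$ and $\|f_k\|\le 1$. The difficulty is that $\nu_i(f_k)\approx 0$ only for $i$ beyond some threshold depending on $k$. For small $i$ we must instead use that each fixed $\nu_i$ puts almost no mass on the tails $A_0(k)\cup A_1(k)$ apart from the endpoints.

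First, fix $\eps>0$ and split each $\nu_i$ into its atoms at $d_0,d_1$ and the rest. For $i$ with $\nu_i(d_0)=\nu_i(d_1)=0$, we have $|\nu_i|(A_0(k)\cup A_1(k))\to 0$ as $k\to\infty$. This is not uniform in $i$, though, so we argue as follows:
\begin{enumerate}[(i)]
\item choose $i_0$ so that $\nu_i(A_m(n))$ is close to $\theta_m$ for $i\ge i_0$;
\item choose $k$ large so that $|\nu_i|(\text{tails minus endpoints})<\eps$ for all $i<i_0$.
\end{enumerate}
For $i<i_0$ we then have
\[ \nu_i(f_k)\approx \nu_i(d_1)-\frac{\theta_1}{\theta_0}\nu_i(d_0). \]
For $i\ge i_0$ we would like $\nu_i(f_k)\approx 0$. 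That requires $\nu_i(A_m(k))\approx\theta_m$ for all $i\ge i_0$ simultaneously, with $k$ chosen after $i_0$. Weak$^\ast$ convergence only gives $\nu_i(A_m(n))\approx\theta_m$ for $i\ge i_0(n)$, where $i_0$ depends on $n$. Hence a diagonal/compactness argument is needed. The plan is to use that $\nu'=0$, i.e.\ $\nu$ is concentrated on the two endpoints. Then $\nu_i\to\nu$ weak$^\ast$ gives $\nu_i(\chi_B)\to 0$ for the clopen set $B=[0,n]\times 2$, while $\nu_i(A_m(n))\to\theta_m$ for every fixed $n$.

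The key point is that the values $\nu_i(A_m(k))$ for $k\ge n$ differ from $\nu_i(A_m(n))$ by the mass of $\nu_i$ on the finite set $[n,k)\times\{m\}$. That mass is small for large $i$, but again only for $i$ beyond a threshold depending on $k$. So the uniform statement cannot be obtained directly, and the honest target is the bound $t-1$ rather than $0$ for large $i$.

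The bound $t-1$ comes from norm-increasingness. Take $\vf$ with $\|\vf\|\le1$, $\vf(d_m)=\operatorname{sgn}\theta_m$ and $\nu(\vf)=\|\nu\|\ge 1$. Then $\nu_i(\vf)\gtrsim 1$ for large $i$, and $\vf$ agrees with $\pm1$ on the tails. For $i\ge i_0$ the tail mass of $\nu_i$ carries essentially all of $|\theta_0|+|\theta_1|\ge 1$ with the signs dictated by $\nu$. Since $\|\nu_i\|\le t$, the mass that can be arranged to oppose $f_k$'s mean-zero pattern is at most $t-1$. This yields $|\nu_i(f_k)|\lessapprox t-1$ for $i\ge i_0$. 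For $i<i_0$ we use the atom formula above: $|\nu_i(d_1)|+|\nu_i(d_0)|\le\|\nu_i\|\le t$, together with the fact that $\nu_i$ must also assign mass near $1$ elsewhere (via Lemma \ref{3:0} or norming a suitable function). This should again give at most $t-1$.

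The main obstacle is making the estimate uniform in $i$ with one fixed $k$, in particular handling the indices $i<i_0$ whose atoms at $d_0,d_1$ may be large. I would first try to show that the combination $\nu_i(d_1)-\frac{\theta_1}{\theta_0}\nu_i(d_0)$ is controlled by $t-1$. The idea is to test $T$ against $e_{(n,m)}$-type functions and use positivity of $T^{-1}$ to force mass at least $1$ away from the endpoints.
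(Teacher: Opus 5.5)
Your skeleton matches the paper's: fix $n$, choose $i_0$ with $\nu_i(f_n)\approx 0$ for $i\ge i_0$, then choose $k>n$ so large that the finitely many $\nu_i$, $i<i_0$, see only their endpoint atoms on $A_0(k)\cup A_1(k)$. But you do not prove either of the two estimates that make up the lemma, and the one for $i<i_0$ needs an idea you do not have.

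\textbf{The case $i<i_0$.} You need $|\nu_i(d_0)|+|\nu_i(d_1)|\le t-1$, i.e.\ that each such $\nu_i$ has variation at least $1$ off the endpoints. You only say this ``should'' follow from Lemma~\ref{3:0} or from testing against $e_{(n,m)}$. Neither works. Lemma~\ref{3:0} yields \emph{some} index $i$ with $\nu_i(h)\ge 1$, not a bound for every $i<i_0$. The functions $e_{(n,m)}$ vanish at $d_0,d_1$, so they say nothing about the atoms. Your sketch also never uses $\theta_0\theta_1>0$, which is exactly what this step needs.

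The paper's argument is as follows. Put $g=T^{-1}(\chi_{[0,i_0)})$ (or $T^{-1}(e_i)$ for a single $i$).
\begin{enumerate}[(a)]
\item Positivity of $T^{-1}$ gives $g\ge 0$, and norm-increasingness gives $\|g\|\le 1$.
\item Since $\nu'=0$, we get $0=Tg(\omega)=\nu(g)=g(\omega,0)\theta_0+g(\omega,1)\theta_1$.
\item As $\theta_0,\theta_1$ have the same sign and $g(\omega,m)\ge 0$, both $g(\omega,m)=0$.
\item Hence for $i<i_0$ we have $1=\nu_i(g)\le|\nu_i|\big(([0,\omega]\times 2)\setminus(d_0\cup d_1)\big)$, so $|\nu_i(d_0)|+|\nu_i(d_1)|\le t-1$.
\end{enumerate}
Since $|\theta_1/\theta_0|\le 1$, this gives $|\nu_i(f_k)|\lesssim t-1$ for $i<i_0$.

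\textbf{The case $i\ge i_0$.} Your claim that ``the mass that can oppose $f_k$'s pattern is at most $t-1$'' is not a valid argument. The part of $\nu_i$ carrying $\theta_0,\theta_1$ can itself contribute to $\nu_i(f_k)$. For example, the $\theta_1$-mass may lie in $A_1(k)$ while the $\theta_0$-mass lies in $[n,k)\times\{0\}$. So that mass cannot simply be subtracted from the budget $t$.

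The fix is a disjoint-support split. For $i\ge i_0$ we have $\nu_i(f_n)\approx 0$, and $f_n=f_k+(f_n-f_k)$ with the two summands of disjoint support and norm $\le 1$. Therefore $2|\nu_i(f_k)|\lesssim|\nu_i(f_k)|+|\nu_i(f_n-f_k)|\le\|\nu_i\|\le t$, i.e.\ $|\nu_i(f_k)|\lesssim t/2\le t-1$. This bound holds for \emph{every} $k>n$. So your worry about having $\nu_i(A_m(k))\approx\theta_m$ uniformly in $i$ with $k$ chosen after $i_0$ does not arise.
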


\begin{proof}
Take any  $n$ and $i_0$ such that $|\nu_i(f_n)|\approx 0 $ for $i\ge i_0$.
Then take  $g\in C([0,\omega] \times 2)$ such that $\nu_i(g)=1$ for  $i<i_0$ and $\nu_i(g)=0$ whenever $i\ge  i_0$.
We have $g\ge 0$ since $T^{-1}$ is a positive operator. Write $g_m=g(\omega,m)$.

It follows that  $\nu(g)=g_0 \theta_0 + g_1 \theta_1  = 0$;  hence  $g_0= g_1=0$ (here we use
$\theta_0\cdot \theta_1>0$).
Consequently, the variation of $\nu_i$ outside the endpoints is at least $1$ for every $i<i_0$.

Therefore there is $k>n$ such that $|\nu_i(f_k)|\lesssim t-1$ for all $i<i_0$.
Now it suffices to notice that for $i\ge i_0$ we have $|\nu_i(f_k)|\lesssim t/2<t-1$;
see \cite[Lemma 2.3(b)]{KP25} if necessary.
\end{proof}

\section{Conclusions}

We have proven that
\[\bmdp\big( C([0,\omega] \times 2), C[0,\omega]\big)\ge \frac{3+\sqrt{17}}{2}>3.56.\]
For the opposite direction, our analysis is incomplete but we get the following.

\begin{corollary}
\[ \bmdp\big( C[0,\omega], C([0,\omega] \times 2)\big) >3. \]
\end{corollary}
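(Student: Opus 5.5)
We argue by contradiction. Suppose that $\bmdp\big( C[0,\omega], C([0,\omega] \times 2)\big)=3$. A positive isomorphism $S:C[0,\omega]\to C([0,\omega]\times 2)$ corresponds to $T=S^{-1}$, and we rescale $T$ to be norm-increasing. So we obtain a sequence of norm-increasing isomorphisms $T_k: C([0,\omega] \times 2) \to C[0,\omega]$ with $T_k^{-1}\ge 0$ and $t_k=\|T_k\|\to 3$. We will show that for all large $k$ each of the possible sign configurations of $(\theta_0,\theta_1)$ for $T_k$ forces $t_k\ge 3+c$ for a fixed $c>0$. This contradicts $t_k\to 3$, and since the bound is uniform, the infimum is strictly greater than $3$.

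\emph{Opposite signs (including a zero $\theta_m$).} If $\theta_0\ge 0\ge \theta_1$, Corollary \ref{3:3} gives $t_k\ge (3+\sqrt{17})/2$. If $\theta_1\ge 0\ge\theta_0$, we interchange the two copies of $[0,\omega]$. This is an isometric automorphism of $C([0,\omega]\times 2)$ preserving positivity, so the same corollary applies. Hence for large $k$ only the case $\theta_0\theta_1>0$ remains. Replacing $T_k$ by $-T_k$ would destroy positivity of the inverse, so the cases of both $\theta_m$ positive and both negative must be kept track of. However, the argument of Lemma \ref{3:4} does not use the common sign.

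\emph{Same signs: forcing $\nu'$ to be small.} The analysis of Subsection \ref{ssec:same} assumes $\nu'=0$, so the first task is a stability statement: if $t_k\to 3$ and $\theta_0\theta_1>0$, then $\theta'_k=\|\nu'_k\|\to 0$. To get this, we redo Gordon's lower-bound argument while keeping the term $\theta'$. We test with the functions $e_{(n,m)}\pm\frac{1}{t+\eps}\chi_{A_{1-m}(n_0)}$ and the function $\varphi$ norming $\nu$, exactly as in the proofs of Lemma \ref{2:1} and Proposition \ref{3:2}, using Lemma \ref{3:0} to locate an $i$ with $\nu_i(h)\ge 1$. This gives a lower bound for some $\|\nu_i\|$ of the form $\theta'+|\theta_0|+|\theta_1|+(\text{terms}\ge 3-\theta'-o(1))$ that must still be $\le t$. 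The aim is an inequality $t\ge 3+c\,\theta'$, which forces $\theta'\to0$.

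\emph{Same signs with $\theta'\approx 0$ (the main obstacle).} We then rerun Lemma \ref{3:4} with $\nu'$ only approximately zero. In its proof, $g_0\theta_0+g_1\theta_1=-\nu'(g)$, which is $O(\theta')$ rather than $0$. Since $g\ge 0$ and $\theta_0,\theta_1$ have the same sign, $g_0,g_1=O(\theta'/\min|\theta_m|)$, and $\min|\theta_m|$ is bounded below because the Gordon estimate forces each endpoint mass to be at least about $1$. So Lemma \ref{3:4} yields $k$ with $|\nu_i(f_k)|\lesssim t-1+o(1)$ for all $i$. The step that should close the argument is to feed $f_k$ (which has norm $1$, since $|\theta_1/\theta_0|\le1$) into a Lemma \ref{3:0}-type test function: $h=e_{(n,1)}+\alpha f_k-\beta\chi_{A_0(n_0)}$, with $\alpha,\beta$ chosen so that $h(n,1)=1$ is attained. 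Lemma \ref{3:0} gives some $i$ with $\nu_i(h)\ge1$, and the bound $|\nu_i(f_k)|\lesssim t-1$ combined with $\theta_0+\theta_1\le t$ should produce a quadratic inequality in $t$ whose root exceeds $3$. Finding the right coefficients $\alpha,\beta$ so that the resulting inequality is strict at $t=3$ is where I expect the real difficulty. If a single test function does not suffice, I would first try to split according to whether $|\theta_1|/|\theta_0|$ is close to $1$ or bounded away from it, and handle the two regimes with different test functions.
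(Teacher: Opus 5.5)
\textbf{There is a genuine gap: the step that produces a bound above $3$ is missing.} Your reduction to $\theta_0\theta_1>0$ is fine and agrees with the paper (Corollary~\ref{3:3}, plus swapping the two copies of $[0,\omega]$). But the proposal stops exactly where that bound has to appear. The test function $h=e_{(n,1)}+\alpha f_k-\beta\chi_{A_0(n_0)}$, with $\alpha,\beta$ unspecified and the hope that Lemma~\ref{3:0} yields a quadratic with root above $3$, is not an argument; nothing you wrote gives a number larger than $3$.

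The missing idea is that Lemma~\ref{3:4} should not be combined with Lemma~\ref{3:0}. It acts as a \emph{replacement of the norm}: on the tail function $f_k$, every $\nu_i$ acts with bound $t-1$ rather than $t$. For $i<i_0$ this holds because each such $\nu_i$ already carries mass at least $1$ off the endpoints; for $i\ge i_0$ it follows from \cite[Lemma 2.3(b)]{KP25}. The paper then applies the Gordon-type basic lemma of \cite{KP25} twice to the functions $f_n=\chi_{A_1(n)}-\frac{\theta_1}{\theta_0}\chi_{A_0(n)}$, which have norm one and $\nu(f_n)\approx 0$. With the parameter $t$ it gives $t\ge \frac{2t}{t-1}+\theta'$. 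Once $\theta'$ is negligible, using $t-1$ in place of $t$ (justified by Lemma~\ref{3:4}) gives $t\ge\frac{2(t-1)}{t-2}$, i.e.\ $t^2-4t+2\ge 0$. This is false at $t=3$, where it reads $3\ge 4$, and it yields $t\ge 2+\sqrt2$.

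\textbf{The preparatory steps are also only announced.} The inequality $t\ge 3+c\,\theta'$ is never derived. The shape of the bound you sketch is not coherent: read literally, it would give $t\ge 3+|\theta_0|+|\theta_1|-o(1)$. Moreover, the argument of Lemma~\ref{2:1} relies on $\nu_i\ge 0$, i.e.\ positivity of $T$, which you do not have here. In the paper this stability comes directly from the first application of the basic lemma: $\theta'\le t-\frac{2t}{t-1}=\frac{t(t-3)}{t-1}\to 0$ as $t\to 3$. Likewise, your perturbed version of Lemma~\ref{3:4} needs $\theta'/\min_m|\theta_m|\to 0$, and you assert rather than prove that $\min_m|\theta_m|$ stays bounded below; a perturbation of Proposition~\ref{3:2} is a plausible route.

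Working with $t_k\to 3$ instead of an isomorphism with $t=3$ is the right framing. The paper's text only discusses $t=3$, so robust versions of this kind are implicitly needed there too. They only pay off, however, once the final $(t-1)$-inequality is in place.
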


\begin{proof}
By the results from section \ref{sec:1->2}, we
only need to treat the case $\theta_0\cdot\theta_1>0$.

Our basic lemma from \cite{KP25}, applied to the 
function of the form $f_n=\chi_{A_1(n)}-\theta_1/\theta_0\chi_{A_0(n)}$, gives
\[ t\ge 2\frac{t}{t-1} + \theta'.\]
Note that $t=3$ would imply $\theta'=0$ but then we can refer to  
Lemma \ref{3:4} and get
\[ t\ge 2\frac{t-1}{t-2},\]
since we can use the basic lemma from \cite{KP25}
with the parameter $t-1$ replacing $t$.
Then $t\ge 2+\sqrt{2}$.
\end{proof}

Surely, there is room for improvement: the best known upper bounds are $2 + \sqrt{5}$ and $2 + \sqrt{3}$ respectively.
However, both bounds are above $3 = \bmd\big( C([0,\omega] \times 2), C[0,\omega]\big)$, so we have answered Question \ref{q:2}.

\section{Remarks}

It seems possible to extend the results of Sections \ref{sec:2->1} and \ref{sec:1->2} to obtain bounds for the values $\bmdp\big( C([0,\omega] \times k), C[0,\omega]\big)$ and $\bmdp\big( C[0,\omega], C([0,\omega] \times k)\big)$ for $k > 2$.

It is also interesting that the bounds from \cite{CHRS26} for $\bmdp\big( C([0,\omega] \times k), C[0,\omega]\big)$ are the same as the bounds for $\bmd\big( C([0,\omega^k]), C[0,\omega]\big)$ from \cite{CG13}. 
This suggests that there might be a deeper connection between these distances.
It may, for example, be  possible to densely embed the space of positive isomorphisms from $C([0,\omega] \times k)$ to $C[0,\omega]$ into the space of all isomorphisms from $C([0,\omega^k])$ to $C[0,\omega]$.
Perhaps it is even possible to define some operation $\ol{}$ on compact spaces such that, for a compact space $K$, we have
\[\bmdp\big( C(\ol{K}), C[0,\omega]\big) = \bmd\big( C(K), C[0,\omega]\big).\]
This could shed some light on the structure of these isomorphisms and open up new methods for calculating Banach--Mazur distances.

\bibliography{refs}
\end{document}